\def\1{{\mathbf 1}}
\def\N{{\mathbb N}}
\def\R{{\mathbb R}}
\def\P{{\mathbb P}}
\def\E{{\mathbb E}}
\def\Nc{{\mathcal N}}
\def\Pc{{\mathcal P}}
\def\Sc{{\mathcal S}}
\numberwithin{equation}{section}
\newtheorem{proposition}{Proposition}[section]
\newtheorem{definition}{Definition}[section]
\newtheorem{lemma}{Lemma}[section]
\newtheorem{rmk}{Remark}[section]
\newcommand{\ignore}[1]{}
\newcommand{\vertiii}[1]{{\left\vert\kern-0.25ex\left\vert\kern-0.25ex\left\vert #1 
    \right\vert\kern-0.25ex\right\vert\kern-0.25ex\right\vert}}
\title{A smooth variational principle on Wasserstein space} 
\author{Erhan Bayraktar}\thanks{E. Bayraktar is partially supported by the National Science Foundation under grant DMS-2106556 and by
the Susan M. Smith chair.}
\address{Department of Mathematics, University of Michigan}
\email{erhan@umich.edu}
\author{Ibrahim Ekren}\thanks{I. Ekren is supported in part by NSF Grant DMS 2007826.}
\address{Department of Mathematics, Florida State University}
\email{iekren@fsu.edu}
\author{Xin Zhang} 
\address{Department of Mathematics, University of Vienna}
\email{xin.zhang@univie.ac.at}
\keywords{Smooth variational principle, sliced Wasserstein distance, optimal transport}
\subjclass[2020]{58E30,	90C05}
\begin{document}
\maketitle

\begin{abstract}
    In this note, we provide a smooth variational principle on Wasserstein space by constructing a smooth gauge-type function using the sliced Wasserstein distance. This function is a crucial tool for optimization problems and in viscosity theory of PDEs on Wasserstein space. 
\end{abstract}

\section{Introduction} 
This note is devoted to proving a smooth variational principle on Wasserstein space. Due to the lack of local compactness, a continuous functions on  an infinite dimensional space may not attain its local maxima/minima, which becomes an issue when dealing with optimization problems. Smooth variational principle provides a way to perturb the function smoothly so that its perturbation can attain its local extremas. Recently,  smooth variational principles on Wasserstein space appeared in the study of viscosity solution of partial differential equations on Wasserstein space. A major effort in this direction was performed by \cite{cosso2021master}.

For a continuous function on a separable Hilbert space, Ekeland's variational principle provides a smooth variation so that the perturbation attains local extremas; see e.g. \cite{MR1013931}. However, in the Wasserstein space the variation part is given by the Wasserstein metric which is not smooth anymore. One of the observations in \cite{cosso2021master} was to use the Borwein-Preiss variational principle \cite[Theorem 2.5.2]{MR2144010}, to have smooth variations on the Wasserstein space, which states that it is sufficient to construct a topologically equivalent complete metric  which is differentiable in the sense of \cite{cdll2019}. In this note, we achieve this using the sliced Wasserstein distance, which defines a metric between high dimensional probability distributions using their one dimensional projections; see e.g. \cite{10.1214/21-ECP383} and page 214 of \cite{santambrogio2015optimal}. The advantage of our choice is that the optimal transport map in one dimension can be explicitly written down, and is regular after a Gaussian convolution. As such, our choice of the sliced Wasserstein distance allows a simple construction of smooth gauge type function compared to the alternative in \cite{cosso2021master}; see in particular Lemma 4.4 therein.

In the next subsection, we recall the definition of Wassertein distance, and the $L$-derivative. Then in Section 2, we analyze the differential properties of Gaussian regularized sliced Wasserstein distance, and finally prove the smooth variational principle in Proposition~\ref{prop:main}.

\subsection{Wasserstein distance and $L$ derivative}

We denote by $\Pc_2(\R^k)$ the set of Borel probability measures $\mu$ such that $\int |x|^2 \, \mu (dx)<\infty$. We endow the space $\Pc_2(\R^k)$  with the 2-Wasserstein distance $W_2$, i.e., for any $\mu,\nu \in \Pc_2(\R^k)$
\begin{align}\label{eq:wasdist}
W_2(\mu,\nu)^2:= \inf_{\pi \in \Pi(\mu,\nu)} \int \frac{1}{2}|x-y|^2 \, \pi(dx,dy),
\end{align}
where $\Pi(\mu,\nu)$ denotes the collection of probability measures on $\R^k \times \R^k$ with first and second marginals $\mu$ and $\nu$ respectively. 

Let us now present the $L$-derivative introduced in \cite{cdll2019}; see \cite[Chapter 5]{MR3752669} for a survey. Let $u:\Pc_2(\R^k) \to \R$, and $(\Omega, \mathcal{P}, \mathbb{P})$ be an atomless probability space. The lifting $U$ of $u$ on the Hilbert space $L^2(\Omega, \mathcal{P},\mathbb{P}; \R)$ is defined via 
\[
U(X):=u(\P_{X}), \quad \forall \, X \in L^2(\Omega,\mathcal{P},\mathbb{P}; \R), 
\]
where $\P_{X}$ stands for the distribution of $X$. Recall that $U$ is said to be Fr\'{e}chet differentiable at some random variable $X \in L^2(\Omega,\mathcal{P},\mathbb{P}; \R)$ if there exists a random variable $Z \in L^2(\Omega,\mathcal{P},\mathbb{P}; \R)$ such that 
\[
\lim\limits_{t \to 0} \frac{U(X+tY)-U(X)}{t}=\E[ZY], \quad \forall \, Y \in L^2(\Omega,\mathcal{P},\mathbb{P}; \R), 
\]
and we denote this derivative $Z$ by $D U(X)$.

\begin{definition}
A function $u: \Pc_2(\R^k) \to \R$ is said to be $L$-differentiable at $\mu$ if there exists some $X \in L^2(\Omega, \mathcal{P},\mathbb{P})$ such that $\P_{X}=\mu$ and $U$ is Fr\'{e}chet differentiable at $X$. And $u$ is said to be $L$-differentiable if there exists a jointly continuous function $D_{\mu} u: \Pc_2(\R^k) \times \R^k \to \R$ such that the lifting $U$ is Fr\'{e}chet differentiable at any $X \in L^2(\Omega, \mathcal{P},\mathbb{P}; \R)$ and $DU(X)=D_{\mu}u(\mu,X)$.
\end{definition}

It was proven in \cite[Theorem 2.2]{MR4174419} that if {the optimizers of \eqref{eq:wasdist} is reduced to the set $\{(Id,T)\sharp \mu\}$ for some measurable function $T$, then $\mu \mapsto W_2(\mu,\nu)^2$ is $L$-differentiable at $\mu$. Additionally, the Fr\'{e}chet derivative of its lift at $X$ with $X \sim \mu$ is $X-T(X)$.} By the Brenier's theorem the condition on the uniqueness of the optimizer is satisfied when $\mu$ is absolutely continuous.

\section{Gaussian regularized sliced Wasserstein distance}

 For any $\mu \in \Pc_2(\R^k)$, and $\theta \in \mathcal{S}^{k-1}$, define the mapping $P_\theta:\R^k\to \R$ by the expression $P_\theta (x)=x^\top \theta$ and the pushforward measure 
 $\mu_{\theta}:=P_\theta\sharp\mu\in  \Pc_2(\R)$. For any $\mu, \nu \in \Pc_2(\R^k)$, the sliced Wasserstein distance is defined via 
\begin{align}
SW_2(\mu,\nu)^2=\int W_2(\mu_{\theta}, \nu_{\theta})^2 \, d \theta,
\end{align}
where the integration is with respect to the standard spherical measure on $\mathcal{S}^{k-1}$; see e.g. \cite{10.1214/21-ECP383} and page 214 of \cite{santambrogio2015optimal}.
Moreover, we consider the Gaussian regularized version 
\begin{align}
SW_2^{\sigma}(\mu,\nu):=  SW_2(\mu^{\sigma}, \nu^{\sigma}),
\end{align}
where $\mu^{\sigma}:=\mu \ast \mathcal{N}_{\sigma}$ and $\mathcal{N}_{\sigma} \in \Pc_2(\R^k)$ is the Normal distribution with variance $\sigma^2 I_{k}$ for some $\sigma\in(0,\infty)$. By abuse of notation, $\mathcal{N}_{\sigma}$ also denotes the one dimensional normal distribution (and its density) with mean $0$ and variance $\sigma^2$, and then we have that $\left(\mu^{\sigma}\right)_{\theta}=\left(\mu \ast \mathcal{N}_{\sigma}\right)_{\theta}=\mu_{\theta} \ast \mathcal{N}_{\sigma\theta}=\mu_{\theta} \ast \mathcal{N}_{\sigma}$ where the last Gaussian is one-dimensional and the previous one is $k$-dimensional. 

\begin{lemma}\label{lem:1}
For any $\sigma \geq 0$, $(\Pc_2(\R^k),SW^{\sigma}_2)$ is a complete metric space, and it is equal to $(\Pc_2(\R^k), W_2)$ as a topological space. 
\end{lemma}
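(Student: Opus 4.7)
The plan is to verify, in order, the metric axioms, topological equivalence with $W_2$, and completeness. Two observations underpin the argument: the $1$-Lipschitz property of the projection $P_\theta$, and the injectivity of the Radon transform $\mu\mapsto (\mu_\theta)_\theta$ on $\Pc_2(\R^k)$, which follows from continuity of characteristic functions together with the identity $\widehat{\mu_\theta}(t)=\widehat{\mu}(t\theta)$.

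For the metric axioms of $SW_2^\sigma$, symmetry and non-negativity are immediate; the triangle inequality follows from that of one-dimensional $W_2$ combined with the Minkowski inequality in $L^2(d\theta)$; and if $SW_2^\sigma(\mu,\nu)=0$, then $(\mu^\sigma)_\theta=(\nu^\sigma)_\theta$ for a.e.\ $\theta$, which forces $\mu^\sigma=\nu^\sigma$ by Radon injectivity, and since $\widehat{\mathcal{N}_\sigma}$ is nowhere zero we conclude $\mu=\nu$ (the case $\sigma=0$ is even simpler, as the convolution step is trivial).

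For the topological equivalence with $W_2$, the easy direction $W_2\to 0\Rightarrow SW_2^\sigma\to 0$ uses the coupling $(X+Z,Y+Z)$ (with $(X,Y)$ optimal for $(\mu,\nu)$ and $Z\sim\mathcal{N}_\sigma$ independent) to get $W_2(\mu^\sigma,\nu^\sigma)\le W_2(\mu,\nu)$, together with the $1$-Lipschitz bound on $P_\theta$, yielding $SW_2^\sigma\le \sqrt{\omega_{k-1}}\,W_2$. For the converse, assume $SW_2^\sigma(\mu_n,\mu)\to 0$. The identity $SW_2(\nu,\delta_0)^2 = \frac{\omega_{k-1}}{2k}\int |y|^2\,\nu(dy)$ together with the triangle inequality yields $\int |y|^2\,\mu_n(dy)\to \int |y|^2\,\mu(dy)$, hence tightness. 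From any subsequence extract a further one along which $\mu_{n_k}\rightharpoonup \tilde\mu$ and $W_2((\mu_{n_k}^\sigma)_\theta,(\mu^\sigma)_\theta)\to 0$ for a.e.\ $\theta$; combining weak continuity of projections with these pointwise $W_2$-limits gives $(\tilde\mu^\sigma)_\theta=(\mu^\sigma)_\theta$ a.e., so $\tilde\mu=\mu$ by Radon injectivity. Hence $\mu_n\rightharpoonup \mu$, and together with convergence of second moments this upgrades to $W_2(\mu_n,\mu)\to 0$.

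Finally, for completeness, a Cauchy sequence $(\mu_n)$ in $SW_2^\sigma$ has bounded second moments by the same $SW_2(\cdot,\delta_0)$ identity and is therefore tight; a diagonal variant of the Radon-injectivity argument (extract $k_j,l_j$ so that $SW_2^\sigma(\mu_{n_{k_j}},\mu_{m_{l_j}})\to 0$ and pass to a subsequence where the integrand converges a.e.\ in $\theta$) shows all weak subsequential limits agree at some $\tilde\mu\in \Pc_2(\R^k)$ (membership in $\Pc_2$ via Fatou), so $\mu_n\rightharpoonup\tilde\mu$. To upgrade to $SW_2^\sigma$-convergence, apply the lower semicontinuity of $W_2$ under weak convergence pointwise in $\theta$, then Fatou's lemma in $d\theta$, to obtain
\[
SW_2^\sigma(\mu_n,\tilde\mu)^2 = \int W_2((\mu_n^\sigma)_\theta,(\tilde\mu^\sigma)_\theta)^2\,d\theta \le \liminf_{m\to\infty}\int W_2((\mu_n^\sigma)_\theta,(\mu_m^\sigma)_\theta)^2\,d\theta = \liminf_{m\to\infty}SW_2^\sigma(\mu_n,\mu_m)^2,
\]
which is arbitrarily small by the Cauchy property. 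The main obstacle, shared by the converse direction of topological equivalence and the identification of the limit in completeness, is bridging from integral-in-$\theta$ information to a.e.-in-$\theta$ information for the one-dimensional projections and then back to the full $k$-dimensional measure; this is precisely where Radon-transform injectivity (and, for $\sigma>0$, the nonvanishing of $\widehat{\mathcal{N}_\sigma}$) is indispensable.
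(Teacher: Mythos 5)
Your proof is correct, but it follows a genuinely different route from the paper's. The paper establishes tightness of the Cauchy sequence by picking a finite family $\{\theta(1),\dots,\theta(F)\}$ from a full-measure set of directions along which the one-dimensional marginals are $W_2$-Cauchy, so that $|x|^2 \le 2\max_i|x^\top\theta(i)|^2$; this gives \emph{uniform integrability} of second moments of $\mu^n\ast\Nc_\sigma$, hence precompactness in $W_2$ (not merely weak precompactness), after which the deconvolution is handled by invoking Lemma 4.2 of \cite{cosso2021master}, and the subsequential $W_2$-limit is promoted to an $SW_2^\sigma$-limit of the full sequence via the inequality $SW_2^\sigma \le C\, W_2$. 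Your argument instead gets only \emph{boundedness} of second moments from the $SW_2^\sigma(\cdot,\delta_0)$ identity, hence weak (Prokhorov) precompactness, and then closes the gap by a different mechanism: you identify the limit by passing from the $L^2(d\theta)$ information to a.e.\ $\theta$ convergence and applying Radon-transform injectivity together with the nonvanishing of $\widehat{\Nc_\sigma}$, and you promote weak convergence to $SW_2^\sigma$-convergence via lower semicontinuity of $W_2$ in each slice plus Fatou in $d\theta$. This buys you a self-contained proof — no appeal to \cite{cosso2021master} — and it also makes explicit the verification that $SW_2^\sigma$ is a genuine metric (identity of indiscernibles via Radon injectivity), a point the paper's proof leaves tacit even though it is implicitly used when identifying limits. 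The trade-off is that the paper's finite-covering argument produces $W_2$-compactness directly, while your completeness argument only produces $SW_2^\sigma$-convergence and relies on the separately-proved topological equivalence (with its own second-moment convergence step) to say anything about $W_2$. Both are sound.
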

\begin{proof}

Let us first prove the first claim. Take any Cauchy sequence $(\mu^n)_{n \geq 1}$ in $(\Pc_2(\R^k), SW^{\sigma}_2)$, and we can assume without loss of generality that 
\begin{align*}
\sum_{n \geq 1} SW^{\sigma}_2(\mu^n, \mu^{n+1})^2=\int \sum_{n \geq 1} W_2(\mu^n_{\theta} \ast \Nc_{\sigma}, \mu^{n+1}_{\theta}\ast \Nc_{\sigma})^2 \, d\theta < +\infty.
\end{align*}
Define $\mathcal{S} \subset \mathcal{S}^{k-1}$ to be the the set of $\theta$ such that $\sum_{n \geq 1} W_2(\mu^n_{\theta}\ast \Nc_{\sigma}, \mu^{n+1}_{\theta} \ast \Nc_{\sigma})^2$ is  bounded. Then it is clear that $\mathcal{S} \subset \mathcal{S}^{k-1}$ is of full spherical measure. Choose a finite subset $\{\theta(1), \dotso, \theta(F) \} \subset \mathcal{S}$ with the property that 
\begin{align*}
| x |^2 \leq 2 \max_{i=1,\dotso, F} |x^\top \theta(i)|^2, \quad \forall \,   x \in \R^k.
\end{align*}
Then it can be easily seen that 
\begin{align*}
    \lim_{R \to \infty} \sup_{n \in \N} \int_{\{x \in \R^k: | x | \geq R\}} | x |^2 \, \mu^n \ast \Nc_{\sigma}(dx)=0,
\end{align*}
 and hence $\{\mu^n \ast \Nc_{\sigma} \}_{n \geq 1}$is tight with respect to $W_2$ topology.  As in \cite[Lemma 4.2]{cosso2021master}, it can be shown that $(\mu^n)_{n \geq 1}$ is also tight, and has a limit $\nu \in \Pc_2(\R^k)$ with respect to the $W_2$ metric. Due to the inequality 
\begin{align*}
    W_2(\mu^n_{\theta} \ast \Nc_{\sigma}, \nu_{\theta} \ast \Nc_{\sigma}) \leq W_2(\mu^n_{\theta}, \nu_{\theta}) \leq W_2(\mu^n, \nu),
\end{align*}
we conclude that $\mu^n$ converges to $\nu$ in  $SW^{\sigma}_2$ distance.  

The above inequality implies that the topology generated $W_2$ is stronger than that generated by $SW_2^{\sigma}$. By the argument in the first paragraph, for any sequence $(\mu^n)_{n \geq 1}$ such that $SW_2^{\sigma}(\mu^n, \nu) \to 0$ with some limit $\nu \in \Pc_2(\R^k)$, there is a tight subsequence that converges to $\nu$ in the $W_2$ distance. Therefore, $SW^{\sigma}_2$ induces the same topology as $SW_2$ and $W_2$.
\end{proof}

 The advantage of $SW^{\sigma}_2$ is that we can easily compute its derivatives. Denote the cumulative distribution function of $\mu$ by $F_{\mu}$. Then it is well known that in the one dimensional case, the optimal transport map from $\mu^{\sigma}_{\theta}$ to $\nu^{\sigma}_{\theta}$ is given by 
 \[
 T^{\sigma}_{\theta}(x):=F^{-1}_{\nu^{\sigma}_{\theta}}(F_{\mu^{\sigma}_{\theta}}(x)),
 \]
 which satisfies $W_2(\mu^{\sigma}_{\theta}, \nu^{\sigma}_{\theta})^2=\int \frac{1}{2} |x-T^{\sigma}_{\theta}(x)|^2 \, \mu^{\sigma}_{\theta}(dx)$.

\begin{lemma}\label{lem:2}
Let $ \nu \in \Pc_2(\R^k)$, $\sigma \geq 0$ and $\theta\in \Sc^{k-1}$ be fixed so that $F_{\mu^\sigma_{\theta}}$ and $ F_{\nu^\sigma_{\theta}}$ are continuous and strictly increasing functions\footnote{This assumption is only needed for $\sigma=0$.}. Then, the mapping
$$\mu\in \Pc_2(\R^k)\mapsto  SW^{\sigma}_2(\mu,\nu)^2$$
is $L$-differentiable, and
\begin{align}\label{eq:Dmusw}
&D_{\mu} SW^{\sigma}_2(\mu,\nu)^2(x) =  \int_{\mathcal{S}^{k-1}}\theta \left(\theta^\top x  -\E\left[ T^{\sigma}_{\theta}(\theta^\top  (x+N_\sigma)) \right]\right)\, d \theta,
\end{align}
where $N_{\sigma} \sim \mathcal{N}_{\sigma}$. Moreover, we have the estimate 
\begin{align}\label{eq:firstbound}
    \int_{\R^k} | D_{\mu} SW^{\sigma}_2(\mu,\nu)^2(x) |^2 \, \mu(dx) \leq C\left( \int_{\R^k} |x|^2 \, \mu(dx)+ \int_{\R^k} |y|^2 \, \nu^{\sigma}(dy) \right).
\end{align}
\end{lemma}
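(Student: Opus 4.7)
The plan is to fix a slicing direction $\theta \in \mathcal{S}^{k-1}$, compute the $L$-derivative of the single-slice quantity $\mu \mapsto W_2(\mu^\sigma_\theta, \nu^\sigma_\theta)^2$ via a chain rule based on the quoted one-dimensional Brenier result, and then exchange the spherical integration with the differentiation. Throughout, I lift $\mu$ to an $X \in L^2(\Omega,\mathcal{P},\mathbb{P};\R^k)$ with law $\mu$ and take an independent $N_\sigma \sim \mathcal{N}_\sigma$ on the same (possibly enlarged) atomless space, and for an arbitrary perturbation $Y \in L^2$ I will compute $\frac{d}{dt}\big|_{t=0}$ along $\mu_t := \P_{X+tY}$.

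\textbf{Slicewise derivative.} Fix $\theta$ and set $V_\theta(X) := W_2(\P_{\theta^\top(X+N_\sigma)}, \nu^\sigma_\theta)^2$. The Gaussian convolution guarantees $\mu^\sigma_\theta$ is absolutely continuous when $\sigma>0$, and the standing assumption covers $\sigma=0$; in either case the quoted \cite[Theorem 2.2]{MR4174419} gives that the lift of $\rho \mapsto W_2(\rho, \nu^\sigma_\theta)^2$ is Fr\'echet differentiable at $\theta^\top(X+N_\sigma)$ with derivative $\theta^\top(X+N_\sigma) - T^\sigma_\theta(\theta^\top(X+N_\sigma))$. Since $(\mu_t)^\sigma_\theta = \P_{\theta^\top(X+N_\sigma)+t\theta^\top Y}$, a direct expansion gives
$$\frac{d}{dt}\bigg|_{t=0} V_\theta(X+tY) = \E\!\left[\big(\theta^\top(X+N_\sigma) - T^\sigma_\theta(\theta^\top(X+N_\sigma))\big)\,\theta^\top Y\right].$$
Using $N_\sigma \perp (X,Y)$ and $\E[N_\sigma]=0$, conditioning on $X$ collapses the right-hand side to $\E[\theta\, Z_\theta(X)\cdot Y]$, where $Z_\theta(x) := \theta^\top x - \E[T^\sigma_\theta(\theta^\top(x+N_\sigma))]$ (the expectation being over $N_\sigma$ alone). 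This identifies the slicewise $L$-derivative as $\theta Z_\theta(x)$.

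\textbf{Integration over $\theta$ and the $L^2$ bound.} For the full lift $U(X) := \int_{\mathcal{S}^{k-1}} V_\theta(X)\, d\theta$, Fubini yields
$$\frac{U(X+tY)-U(X)}{t} = \int_{\mathcal{S}^{k-1}} \int_0^1 \E\big[\theta^\top Y\cdot Z_\theta(X+stY)\big]\, ds\, d\theta.$$
A Cauchy--Schwarz estimate, $|\theta|=1$, the inequality $|Z_\theta(x)|^2 \leq 2|x|^2 + 2\,\E[T^\sigma_\theta(\theta^\top(x+N_\sigma))^2]$, and the push-forward identity $\int T^\sigma_\theta(y)^2 \mu^\sigma_\theta(dy) = \int y^2 \nu^\sigma_\theta(dy) \leq \int |z|^2 \nu^\sigma(dz)$ together furnish a $(\theta,s)$-uniform integrable bound. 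Dominated convergence then lets $t \to 0$ and produces \eqref{eq:Dmusw}. The same ingredients, after integrating against $\mu(dx)$ and swapping order with the spherical integral, prove \eqref{eq:firstbound}.

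\textbf{Joint continuity.} To qualify as an $L$-derivative in the sense of the definition, the map $(\mu,x) \mapsto D_\mu SW^\sigma_2(\mu,\nu)^2(x)$ must be jointly continuous. For each $\theta$ this follows because the Gaussian convolution forces $F_{\mu^\sigma_\theta}$ to be smooth and strictly increasing with full support, and to depend continuously on $\mu \in (\Pc_2, W_2)$ uniformly on compacts, so $T^\sigma_\theta = F^{-1}_{\nu^\sigma_\theta} \circ F_{\mu^\sigma_\theta}$ depends continuously on $\mu$, while the further Gaussian average in $x$ is smooth; the push-forward identity again supplies the uniform integrability needed to pass from pointwise convergence in $\theta$ to integrated convergence. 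I expect this joint continuity to be the main technical obstacle, since it couples weak convergence of the projected-and-convolved measures, continuity of the quantile-inverse composition defining $T^\sigma_\theta$, and Gaussian smoothing in $x$, all uniformly over $\theta$; the first two steps are essentially bookkeeping by comparison.
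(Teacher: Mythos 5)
Your route is genuinely different from the paper's, and for $\sigma>0$ it is essentially correct. The paper does \emph{not} invoke \cite[Theorem 2.2]{MR4174419} as a black box; instead it reruns the squeeze argument of that theorem directly for the projected slice: it bounds $W_2(\mu^{n,\sigma}_\theta,\nu^\sigma_\theta)^2$ from above by plugging the unperturbed transport map into the perturbed problem, bounds it from below by picking an optimal coupling $Y^n$ for the perturbed problem and comparing, and then appeals to \cite[Lemma 2.5]{MR4174419} for the convergence $Y^n\to T^\sigma_\theta(\theta^\top(X+N_\sigma))$. Your approach instead treats the quoted Fr\'echet-differentiability result as an oracle, composes with the affine map $X\mapsto\theta^\top(X+N_\sigma)$, and uses the adjoint/conditioning step to identify the slicewise $L$-derivative as the $\sigma(X)$-measurable random variable $\theta(\theta^\top X-\E[T^\sigma_\theta(\theta^\top(X+N_\sigma))])$. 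That conditioning step is exactly right and is the clean way to see why the Gaussian average appears in \eqref{eq:Dmusw}. The trade-off is that the paper's squeeze automatically delivers a quantitative remainder (the term $\E[|T^\sigma_\theta-Y^n|^2]^{1/2}+\tfrac1{2n}$) that dominates the $\theta$-integral, whereas you build the domination by hand through the push-forward identity, which is fine.

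Two caveats. First, your passage from the slicewise derivative to the full integral uses a fundamental-theorem-of-calculus rewrite along $s\mapsto X+stY$, which requires Fr\'echet differentiability at every point of the segment; this is automatic for $\sigma>0$ (Gaussian convolution regularizes every measure on the segment), but it is \emph{not} guaranteed by the lemma's hypotheses when $\sigma=0$, since those only constrain the endpoint $\mu$. You can avoid this entirely by dominating $\big|\tfrac{V_\theta(X+tY)-V_\theta(X)}{t}\big|$ directly with the $L^2$-Lipschitz estimate for $W_2^2$ and the bound $\int|y|^2\,\nu^\sigma_\theta(dy)\le\int|z|^2\,\nu^\sigma(dz)$, using the already-established pointwise-in-$\theta$ convergence for the slicewise G\^ateaux limit. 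This sidesteps the FTC and matches what the squeeze gives the paper for free. Second, you correctly flag the joint continuity of $(\mu,x)\mapsto D_\mu SW^\sigma_2(\mu,\nu)^2(x)$ as a genuine requirement of the paper's definition of $L$-differentiability; neither your proposal nor the paper actually proves it, so this is an honest shared gap rather than a defect specific to your approach.
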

\begin{proof}
The proof relies on the proof of Theorem 2.2 of \cite{MR4174419}. We first prove that if $F_{\mu^\sigma_{\theta}}$ and $ F_{\nu^\sigma_{\theta}}$ are continuous and strictly increasing functions for some $\theta \in \Sc^{k-1}$, then the function 
$$\mu\in \Pc_2(\R^k)\mapsto W_2(\mu^\sigma_{\theta}, \nu^\sigma_{\theta})^2$$
is $L$-differentiable at $\mu$, and its $L$-derivatives is given by
\begin{align}\label{eq:dertheta}
D_{\mu}\left(W_2(\mu^\sigma_{\theta}, \nu^\sigma_{\theta})^2\right)(x)=\theta \left(\theta^\top x  -\E\left[ T^{\sigma}_{\theta}(\theta^\top  (x+N_\sigma)) \right]\right).
\end{align}

Fix $X\in L^2(\Omega, \mathcal{P},\mathbb{P}; \R)$ with distribution $\mu$ and $\xi\in L^2(\Omega, \mathcal{P},\mathbb{P}; \R)$ with norm $1$. Denote $N_\sigma\in L^2(\Omega, \mathcal{P},\mathbb{P}; \R)$ which is independent of $X$ with distribution $\mathcal{N}_{\sigma}$.
Denote $\mu^{n,\sigma}$ the distribution of $X+\frac{\xi}{n}+N_\sigma$ and note that $\mu^\sigma$ is the distribution of $X+N_\sigma$. 
By the minimality of the 2-Wasserstein distance, we have that 
\begin{align*}
    &W_2(\mu^{n,\sigma}_\theta,\nu_\theta^\sigma)^2\leq \frac{1}{2}\E\left[\left|X^\top\theta+\frac{\xi^\top\theta}{n}+N^\top_\sigma \theta-T^\sigma_{\theta}(\theta^\top(X+N_\sigma))\right|^2\right]\\
    &\leq  W_2(\mu^\sigma_\theta,\nu_\theta^\sigma)^2+\E\left[\frac{\xi^\top\theta}{n}\left((X+N_\sigma)^\top\theta-T^\sigma_{\theta}((X+N_\sigma)^\top\theta)\right)\right]+\frac{1}{2}\E\left[\frac{(\xi^\top\theta)^2}{n^2}\right].
\end{align*}
We now take $Y^n\in L^2(\Omega, \mathcal{P},\mathbb{P}; \R)$ with distribution $\nu_\theta^\sigma$ so that the coupling $(X^\top\theta+\frac{\xi^\top\theta}{n}+N^\top_\sigma\theta,Y^n)$ yields to an optimal coupling between $\mu^{n,\sigma}_\theta$ and $\nu_\theta^\sigma$. We have the following estimate
\begin{align*}
    W_2(\mu^\sigma_\theta,\nu_\theta^\sigma)^2&\leq \frac{1}{2}\E\left[\left|(X+N_\sigma)^\top\theta-Y^n\right|^2\right]\\
    &\leq  W_2(\mu^{n,\sigma}_\theta,\nu_\theta^\sigma)^2-\E\left[\frac{\xi^\top\theta}{n}\left((X+N_\sigma)^\top\theta-Y^n\right)\right]-\frac{1}{2}\E\left[\frac{(\xi^\top\theta)^2}{n^2}\right]\\
    &\leq  W_2(\mu^{n,\sigma}_\theta,\nu_\theta^\sigma)^2-\E\left[\frac{\xi^\top\theta}{n}\left((X+N_\sigma)^\top\theta-T^\sigma_{\theta}\left((X+N_\sigma)^\top\theta\right)\right)\right]\\
    & \ \ \ \ \ -\E\left[\frac{\xi^\top\theta}{n}\left(T^\sigma_{\theta}\left((X+N_\sigma)^\top\theta\right)-Y^n\right)\right]-\frac{1}{2}\E\left[\frac{(\xi^\top\theta)^2}{n^2}\right].
\end{align*}
Thus, we obtain the inequality 
\begin{align*}
    &n\left| W_2(\mu^{n,\sigma}_\theta,\nu_\theta^\sigma)^2-W_2(\mu^\sigma_\theta,\nu_\theta^\sigma)^2-\E\left[\frac{\xi^\top\theta}{n}\left((X+N_\sigma)^\top\theta-T^\sigma_{\theta}\left((X+N_\sigma)^\top\theta\right)\right)\right]\right|\\
    &\leq \E\left[\left|T^\sigma_{\theta}\left((X+N_\sigma)^\top\theta\right)-Y^n\right|^2\right]^{1/2}+\frac{1}{2n},
\end{align*}
where the last line goes to $0$ thanks to Lemma 2.5 of \cite{MR4174419}, and we obtain \eqref{eq:dertheta} thanks to the fact that $N_\sigma$ has $0$ mean. 

Integrating \eqref{eq:dertheta} over $\theta$, we obtain \eqref{eq:Dmusw}. In the end, let us show \eqref{eq:firstbound}, \begin{align*}
    \int_{\R^k} | D_{\mu} SW^{\sigma}_2(\mu,\nu)^2(x) |^2 \, \mu(dx) &\leq 2 \int \theta \theta^{\top} \, d \theta \int |x|^2 \, \mu(dx)+ 2\int \, d\theta \int |T^{\sigma}_{\theta}(x)|^2 \, \mu^{\sigma}_{\theta}(dx) \\
    &=2 \int \theta \theta^{\top} \, d \theta \int |x|^2 \, \mu(dx)+2 \int \, d\theta \int |y|^2 \, \nu^{\sigma}_{\theta}(dy) \\ &\leq C \left(\int_{\R^k} |x|^2 \, \mu(dx)+ \int_{\R^k} |y|^2 \, \nu^{\sigma}(dy) \right).
\end{align*}

\end{proof}

It can be easily seen that there exists some positive $\kappa \leq 1$ such that
\begin{align}\label{eq:kappa}
\int_{\mathcal{S}^{k-1}} \theta  \theta^\top \, d \theta = \kappa I_{k},  
\end{align}
and hence 
\begin{align*}
     \int_{\mathcal{S}^{k-1}} |\theta^\top x|^2 \, d \theta = \kappa |x|^2, \quad \forall x \in \R^k. 
\end{align*}
In the following Lemma, we will compute the derivative of $x \mapsto D_{\mu} SW^{\sigma}_2(\mu,\nu)(x)$ and their moments.
\begin{lemma}\label{lem:3}
For $\sigma>0$, we have the following results for derivatives. 
\begin{align}
     & D_{x\mu}^2  SW^{\sigma}_2(\mu,\nu)^2(x)\label{eq:Dmxusw} = \E \int_{\mathcal{S}^{K-1}} \theta  \theta^\top \left(1-(T_{\theta}^{\sigma})'(\theta^\top(x+N_{\sigma}) ) \right) d \theta, \\
 \int_{\R^k} & | D^2_{x\mu} SW^{\sigma}_2(\mu,\nu)^2(x) | \, \mu(dx) \leq C\left(1+\frac{1}{\sigma}\sqrt{ \int |y|^2 \, \nu^{\sigma}(dy)}\right). \label{eq:secondbound}
\end{align}
\end{lemma}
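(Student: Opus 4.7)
The plan is to derive \eqref{eq:Dmxusw} by differentiating the $L$-derivative formula of Lemma~\ref{lem:2} in the spatial variable $x$, and to establish \eqref{eq:secondbound} by reducing the estimate to a Fisher-information bound for $\mu^\sigma_\theta=\mu_\theta\ast\Nc_\sigma$.

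First, I would start from
\[
D_\mu SW^\sigma_2(\mu,\nu)^2(x) = \int_{\Sc^{k-1}} \theta\bigl(\theta^\top x - \E[T^\sigma_\theta(\theta^\top(x+N_\sigma))]\bigr)\, d\theta
\]
and differentiate in $x$. For $\sigma>0$, both $\mu^\sigma_\theta$ and $\nu^\sigma_\theta$ have smooth and strictly positive densities, so $T^\sigma_\theta = F^{-1}_{\nu^\sigma_\theta}\circ F_{\mu^\sigma_\theta}$ is $C^1$ with $(T^\sigma_\theta)'(z) = f_{\mu^\sigma_\theta}(z)/f_{\nu^\sigma_\theta}(T^\sigma_\theta(z))$. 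Applying the chain rule $\nabla_x T^\sigma_\theta(\theta^\top(x+N_\sigma)) = (T^\sigma_\theta)'(\theta^\top(x+N_\sigma))\,\theta$, and interchanging $\nabla_x$ with both $\E[\cdot]$ (by dominated convergence using local boundedness of $(T^\sigma_\theta)'$) and with the $\theta$-integral (by Fubini), yields \eqref{eq:Dmxusw}.

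Next, for \eqref{eq:secondbound}, I would take the operator norm inside the $\theta$-integral. Since $\theta\theta^\top$ has operator norm one on $\Sc^{k-1}$ and $(T^\sigma_\theta)'\geq 0$ by monotonicity of the 1D transport map, one obtains
\[
|D^2_{x\mu} SW^\sigma_2(\mu,\nu)^2(x)| \leq |\Sc^{k-1}| + \int_{\Sc^{k-1}} \E[(T^\sigma_\theta)'(\theta^\top(x+N_\sigma))]\, d\theta.
\]
Integrating against $\mu(dx)$ and using that $\theta^\top(X+N_\sigma)\sim\mu^\sigma_\theta$ when $X\sim\mu$, the second term reduces to $\int_{\Sc^{k-1}}\int_\R (T^\sigma_\theta)'(z)\, f_{\mu^\sigma_\theta}(z)\,dz\,d\theta$. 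Integration by parts (boundary terms vanish thanks to the Gaussian tails of $f_{\mu^\sigma_\theta}$ combined with $\int T^\sigma_\theta(z)^2 \,d\mu^\sigma_\theta(z) = \int w^2\, d\nu^\sigma_\theta(w) < \infty$) rewrites the inner integral as $-\int T^\sigma_\theta(z)\, f'_{\mu^\sigma_\theta}(z)\,dz$, and Cauchy--Schwarz gives
\[
\left|\int T^\sigma_\theta(z)\, f'_{\mu^\sigma_\theta}(z)\,dz\right| \leq \sqrt{\int T^\sigma_\theta(z)^2 \, d\mu^\sigma_\theta(z)}\, \sqrt{I(\mu^\sigma_\theta)},
\]
where $I$ denotes the Fisher information.

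The first factor equals $\sqrt{\int w^2\, d\nu^\sigma_\theta(w)} \leq \sqrt{\int |y|^2 \,d\nu^\sigma(y)}$, using $T^\sigma_\theta\sharp\mu^\sigma_\theta=\nu^\sigma_\theta$ and $\nu^\sigma_\theta=P_\theta\sharp\nu^\sigma$. For the second, I would invoke the Fisher-information convolution inequality $I(\mu_\theta\ast\Nc_\sigma)\leq I(\Nc_\sigma)=1/\sigma^2$, which follows from writing the score of $\mu^\sigma_\theta$ as the conditional expectation $f'_{\mu^\sigma_\theta}(z)/f_{\mu^\sigma_\theta}(z) = -\E[(Z-Y)/\sigma^2 \mid Z=z]$, with $Y\sim\mu_\theta$ independent of $N_\sigma$ and $Z = Y + N_\sigma$, and applying conditional Jensen's inequality. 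The resulting bound $\tfrac{1}{\sigma}\sqrt{\int|y|^2\, d\nu^\sigma(y)}$ is uniform in $\theta$, so integrating over $\Sc^{k-1}$ only contributes a constant factor and yields \eqref{eq:secondbound}. The main technical obstacles will be justifying the integration by parts (by a truncation argument leveraging $T^\sigma_\theta\in L^2(\mu^\sigma_\theta)$) and the Fisher-information estimate; the differentiation under expectation in the first step is routine once one has the explicit formula for $(T^\sigma_\theta)'$ thanks to $\sigma>0$.
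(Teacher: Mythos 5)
Your proof is correct and reaches the same bound, but your treatment of the key estimate $\int (T^\sigma_\theta)'\,d\mu^\sigma_\theta \leq \frac1\sigma\sqrt{\smash[b]{\int|w|^2\,d\nu^\sigma_\theta}}$ is organized differently from the paper's. The paper keeps the convolution structure explicit: it writes $\int (T^\sigma_\theta)'(z)\,\mu^\sigma_\theta(dz)=\int\mu_\theta(dx)\int (T^\sigma_\theta)'(x+y)\,\Nc_\sigma(y)\,dy$, integrates by parts in $y$ against the Gaussian kernel (so the boundary terms decay for free), picks up the score factor $-y/\sigma^2$ from $\Nc_\sigma'$, and applies Cauchy--Schwarz on the product measure $\mu_\theta\otimes\Nc_\sigma$ to split into $\sigma$ and $\sqrt{\smash[b]{\int|T^\sigma_\theta|^2\,d\mu^\sigma_\theta}}=\sqrt{\smash[b]{\int|w|^2\,d\nu^\sigma_\theta}}$. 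You instead integrate by parts against the full density $f_{\mu^\sigma_\theta}$, apply Cauchy--Schwarz with respect to $\mu^\sigma_\theta$ to produce a Fisher-information factor, and then invoke the classical monotonicity $I(\mu_\theta\ast\Nc_\sigma)\leq I(\Nc_\sigma)=1/\sigma^2$. The two routes are mathematically equivalent: the conditional-Jensen proof of the Fisher-information bound you cite is, once unrolled, precisely the paper's explicit calculation, with the score of $\Nc_\sigma$ playing the same role. The paper's version is more self-contained (no named inequality, and the integration-by-parts boundary terms sit on the Gaussian side where decay is immediate), while yours is more conceptual and makes the Fisher-information interpretation transparent; you do, however, owe the extra justification that the boundary terms for $f_{\mu^\sigma_\theta}$ vanish, which you correctly flag via $T^\sigma_\theta\in L^2(\mu^\sigma_\theta)$. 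The remaining steps --- differentiation under the expectation and the $\theta$-integral to get \eqref{eq:Dmxusw}, nonnegativity of $(T^\sigma_\theta)'$, and the operator-norm bound $|\theta\theta^\top|=1$ --- match the paper's argument exactly.
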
 
\begin{proof}
Taking derivatives of $D_{\mu}SW^{\sigma}_2(\mu,\nu)^2(x)$ in $x$, we directly obtain \eqref{eq:Dmxusw}. Let us integrate $D^2_{x\mu}SW^{\sigma}_2(\mu,\nu)^2(x)$ over $\mu$. Using \eqref{eq:Dmxusw} and noting that $(T_{\theta}^{\sigma})'(x)$ is non negative, we have 
\begin{align}\label{eq:derivative}
    \int |D^2_{x\mu}SW^{\sigma}_2(\mu,\nu)^2(x)| \, \mu(dx) \leq C+ C\int \, d \theta \int  (T_{\theta}^{\sigma})'(x)  \, \mu^{\sigma}_{\theta}(dx). 
\end{align}
We only need the estimate of $ \int (T_{\theta}^{\sigma})'(x) \, \mu_{\theta}^{\sigma}(dx)$. According to the definition of convolution, we simply have that
\begin{align*}
\int (T_{\theta}^{\sigma})'(x) \, \mu_{\theta}^{\sigma}(dx) &= \int \mu_{\theta}(dx) \int (T_{\theta}^{\sigma})'(x+y) \Nc^{\sigma}(y) \, dy=- \int \mu_{\theta}(dx) \int T^{\sigma}_{\theta}(x+y) \frac{d}{dy} \Nc^{\sigma}(y) \, dy  \\
&=- \frac{1}{\sigma^2} \int \mu_{\theta}(dx)  \int y T^{\sigma}_{\theta}(x+y) \Nc^{\sigma}(y)\, dy. 
\end{align*}
Then the Cauchy-Schwartz inequality yields to  
\begin{align*}
& \left| \int \mu_{\theta}(dx)  \int y T^{\sigma}_{\theta}(x+y) \Nc^{\sigma}(y)\, dy \right| \\
& \leq \sqrt{\int \int |y|^2 \Nc^{\sigma}(y) \, dy \, \mu_{\theta}(dx)}\sqrt{\int \int |T^{\sigma}_{\theta}(x+y)|^2 \Nc^{\sigma}(y) \, dy \, \mu_{\theta}(dx)}  \\
&= \sigma \sqrt{\int |x|^2 \, \nu^{\sigma}_{\theta}(dx)},
\end{align*}
and therefore $\left|\int (T_{\theta}^{\sigma})'(x) \, \mu_{\theta}^{\sigma}(dx) \right|\leq \frac{1}{\sigma}\sqrt{ \int |y|^2 \, \nu^{\sigma}(dy)}$ for all $\theta \in \mathcal{S}^{k-1}$. Plugging this inequality into \eqref{eq:derivative}, we conclude that 
\begin{align*}
     \int |D^2_{x\mu}SW^{\sigma}_2(\mu,\nu)^2(x)| \, \mu(dx) \leq C\left(1+\frac{1}{\sigma}\sqrt{ \int |y|^2 \, \nu^{\sigma}(dy)}\right).
\end{align*}
\end{proof}

For each $\sigma>0$, let us define the function $\rho_{\sigma}:([0,T] \times \Pc_2(\R^k))^2 \to \R$ via 
\begin{align}
    \rho_{\sigma}\left((s,\mu),(t,\nu) \right)=|t-s|^2+SW^{\sigma}_2(\mu,\nu)^2.
\end{align}
Then it is a gauge type function on $(\Pc_2(\R^k),SW^{\sigma}_2)$; see \cite[Definition 2.5.1]{MR2144010}. The following smooth variational principle is the main result of this paper. 

\begin{proposition}\label{prop:main}
Fix $\delta>0$ and let $G: [0,T] \times \Pc_2(\R^k) \to \R$ be upper semicontinuous and bounded from above. Given $\lambda>0$, let $(t_0,\mu_0) \in [0,T] \times \Pc_2(\R^k)$ be such that 
\begin{align*}
    \sup_{(t,\mu) \in [0,T] \times \Pc_2(\R^k)}G(t,\mu) -\lambda \leq G(t_0,\mu_0).
\end{align*}
Then there exists $(\tilde t, \tilde \mu) \in [0,T] \times \Pc_2(\R^k)$ and a sequence $\{(t_n,\mu_n)\}_{n \geq 1} \subset [0,T] \times \Pc_2(\R^k)$ such that: 
\begin{enumerate}[(i)]
    \item $\rho_{1/\delta}((\tilde t, \tilde \mu),(t_n,\mu_n)) \leq \frac{\lambda}{2^n \delta^2}$, for every $n$;
    \item $G(t_0,\mu_0) \leq G(\tilde t, \tilde \mu)-\delta^2 \phi_{\delta}(\tilde t, \tilde \mu)$, with $\phi_{\delta}:[0,T] \times \Pc_2(\R^k) \to [0,+\infty)$ given by 
    \begin{align}\label{eq:smooth}
        \phi_{\delta}(t,\mu)=\sum_{n=0}^{+\infty} \frac{1}{2^n} \rho_{1/\delta}((t,\mu),(t_n,\mu_n)), \quad \forall \, (t,\mu) \in [0,T] \times \Pc_2(\R^k);
    \end{align}
    \item $G(t,\mu) -\delta^2 \phi_{\delta}(t,\mu) < G(\tilde t, \tilde \mu)-\delta^2 \phi_{\delta}(\tilde t, \tilde \mu)$, for every $(t,\mu) \in ([0,T] \times \Pc_2(\R^k))\setminus\{(\tilde t, \tilde \mu)\}$.
\end{enumerate}
Furthermore, the function $\phi_{\delta}$ satisfies the following properties: 
\begin{enumerate}
    \item[(1)] $\phi_{\delta}$ is differentiable in time and measure;
    \item[(2)] its time derivative is bounded by $4T$;
    \item[(3)] its measure derivative is bounded by 
    \begin{align*}
        \int |D_{\mu} \phi_{\delta}(t,\mu)(x)|^2 \, \mu(dx) \leq C \left(\int |x|^2 \, \mu(dx)+\int |x|^2 \, \tilde \mu(dx)+\frac{1}{\delta^2}  \right);
    \end{align*}
    \item[(4)] the derivative $D^2_{x\mu} \phi_{\delta}(t,\mu)(x)$ satisfies 
    \begin{align*}
        \int |D^2_{x\mu} \phi_{\delta}(t,\mu)(x)| \, \mu(dx) \leq C \left(1+\delta\sqrt{\int |x|^2 \, \tilde \mu(dx)} \right).
    \end{align*}
\end{enumerate}
\end{proposition}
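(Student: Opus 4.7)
The plan is to apply the Borwein-Preiss smooth variational principle \cite[Theorem 2.5.2]{MR2144010} to the complete metric space $([0,T]\times\Pc_2(\R^k), \rho_{1/\delta})$, whose completeness is guaranteed by Lemma~\ref{lem:1}. Since $\rho_{1/\delta}$ is a gauge-type function, Borwein-Preiss directly produces the pair $(\tilde t,\tilde\mu)$ and the sequence $\{(t_n,\mu_n)\}_{n\ge 0}$ for which items (i), (ii), (iii) hold without further work. The entire substance that remains is therefore to establish the smoothness properties (1)--(4) of $\phi_\delta$.

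For (1) and (2), I would differentiate the defining series term by term. The Gaussian regularization with $\sigma=1/\delta>0$ ensures that $F_{\mu^\sigma_\theta}$ and $F_{\mu^\sigma_{n,\theta}}$ are continuous and strictly increasing, so Lemma~\ref{lem:2} applies to each summand, yielding
\begin{align*}
\partial_t\phi_\delta(t,\mu)=\sum_{n=0}^{\infty} 2^{1-n}(t-t_n),\qquad D_\mu\phi_\delta(t,\mu)(x)=\sum_{n=0}^\infty 2^{-n} D_\mu SW^{1/\delta}_2(\mu,\mu_n)^2(x).
\end{align*}
The geometric factor $2^{-n}$, combined with the uniform in $n$ bound on the derivatives derived below, will show that the partial sums of the Fr\'echet derivatives of the lift are Cauchy in $L^2$ uniformly on bounded sets, which justifies passing the differentiation through the sum. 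Since $|t-t_n|\leq T$, the time derivative is bounded by $\sum_{n=0}^\infty 2^{1-n}T = 4T$, giving (2).

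The crux of the proof is (3) and (4), whose key ingredient is a uniform second-moment estimate on $\{\mu_n\}$. Cauchy-Schwarz with weights $2^{-n}$ gives $|D_\mu\phi_\delta(t,\mu)(x)|^2\leq 2\sum_n 2^{-n}|D_\mu SW^{1/\delta}_2(\mu,\mu_n)^2(x)|^2$, so by \eqref{eq:firstbound} the integral against $\mu$ reduces to controlling $\int|x|^2\mu_n(dx)$ uniformly in $n$. The main obstacle is to replace $\int|x|^2\mu_n(dx)$ by a quantity involving only $\tilde\mu$. I would do this by using \eqref{eq:kappa} to write $\kappa\int|x|^2\mu_n^{1/\delta}(dx)=\int_{\Sc^{k-1}}\int|y|^2(\mu_n^{1/\delta})_\theta(dy)\,d\theta$, then applying the one-dimensional triangle inequality against the Dirac mass at the origin, namely $\int|y|^2\eta(dy)\le 4W_2(\eta,\zeta)^2+2\int|y|^2\zeta(dy)$, with $\zeta=(\tilde\mu^{1/\delta})_\theta$, and integrating in $\theta$. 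This gives $\int|x|^2\mu_n^{1/\delta}(dx)\le C(SW^{1/\delta}_2(\mu_n,\tilde\mu)^2+\int|x|^2\tilde\mu^{1/\delta}(dx))$, and item (i) bounds the first term by $\lambda/(2^n\delta^2)\le\lambda/\delta^2$, so $\int|x|^2\mu_n(dx)\le C(\int|x|^2\tilde\mu(dx)+1/\delta^2)$ uniformly in $n$. Summing the resulting geometric series yields (3).

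Property (4) follows by the same moment-control argument: the triangle inequality bounds $|D^2_{x\mu}\phi_\delta(t,\mu)(x)|$ by $\sum_n 2^{-n}|D^2_{x\mu}SW^{1/\delta}_2(\mu,\mu_n)^2(x)|$, and per-term integration against $\mu$ is controlled by \eqref{eq:secondbound}, namely $C(1+\delta\sqrt{\int|y|^2\mu_n^{1/\delta}(dy)})$. Inserting the uniform moment bound and using $\delta\cdot(1/\delta)=1$ reduces this to $C(1+\delta\sqrt{\int|x|^2\tilde\mu(dx)})$, and summing the geometric series gives (4). The only nontrivial step in the whole argument is therefore the uniform second-moment control on $\{\mu_n\}$, for which the sliced representation \eqref{eq:kappa} is tailor-made.
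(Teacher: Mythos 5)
Your proposal is correct and follows essentially the same approach as the paper: parts (i)--(iii) come directly from Borwein--Preiss, parts (1)--(2) from Lemma~\ref{lem:2} together with term-by-term differentiation, and parts (3)--(4) from the key observation that item (i) combined with the slicing identity \eqref{eq:kappa} and a one-dimensional triangle inequality against $\tilde\mu$ yields a uniform-in-$n$ second-moment bound on $\mu_n$, after which the per-term estimates \eqref{eq:firstbound} and \eqref{eq:secondbound} are summed geometrically. Your Cauchy--Schwarz step with weights $2^{-n}$ and the explicit justification for exchanging differentiation and summation are, if anything, slightly more careful than the paper's exposition.
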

\begin{proof}
Part \emph{(i),(ii),(iii)} directly follows from \cite[Theorem 2.5.2]{MR2144010}. Part $\emph{(1)}$ is due to Lemma~\ref{lem:2} and part $\emph{(2)}$ is trivial. Let us prove part $\emph{(3),(4)}$. 

Recall the $\kappa$ defined in \eqref{eq:kappa}. For any $\mu,\nu \in \Pc_2(\R^k)$, it can be easily seen that
\begin{align*}
    \kappa \int |y|^2 \, \nu(dx)=  \int \, d\theta \int |y|^2 \, \nu_{\theta}(dx) \leq \int |x|^2 \, \mu(dy)+SW_2(\mu,\nu)^2.
\end{align*}
Denoting $\sigma:=1/\delta$ and replacing $\nu$ and $\mu$ with $\mu_n^{\sigma}$ and $\tilde \mu^{\sigma}$ respectively in the above inequality, we obtain  $\kappa \int |y|^2 \, \mu_n^{\sigma}\leq \int |x|^2 \tilde \mu^{\sigma}(dx)+\frac{\lambda}{2^n \delta^2}$ by part \emph{(i)}. Therefore, according to Lemma~\ref{lem:3}, we get that 
\begin{align*}
    &\int |D_{\mu} \rho_{\sigma}((t,\mu),(t_n,\mu_n))(x)|^2 \, \mu(dx) \leq C \left(\int |x|^2 \, \mu(dx)+\int |x|^2 \, \tilde \mu(dx)+\frac{1}{\delta^2}  \right), \\
    &\int |D^2_{x\mu} \rho_{\sigma}((t,\mu),(t_n,\mu_n))(x)| \, \mu(dx) \leq C \left(1+\delta\sqrt{\int |x|^2 \, \tilde \mu(dx)} \right).
\end{align*}
Then by the definition \eqref{eq:smooth}, summing the above two inequalities over $n$ we conclude part $\emph{(3),(4)}$.
\end{proof}

\begin{rmk} 
\cite[Lemma 4.4]{cosso2021master} constructed a gauge type function using dyadic partitions of the underlying space $\R^k$. Our construction   $\rho_{\sigma}$ is much simpler, and can serve as a substitute of \cite[Lemma 4.4]{cosso2021master}. Furthermore, $\rho_{\sigma}$ is twice differentiable with respect to $\mu$, and thus could be useful in the study of second-order partial differential equations on Wasserstein space. 
\end{rmk}

\bibliographystyle{siam}
\bibliography{ref.bib}

\end{document}